\documentclass[reqno,oneside]{amsart}
\usepackage{geometry}
\usepackage{csquotes}
\usepackage{graphicx} 
\usepackage{todonotes}
\usepackage{mathtools}
\usepackage{esint}
\usepackage[bookmarks = true,colorlinks = true]{hyperref}
\usepackage{esint}

\theoremstyle{plain}
\newtheorem{proposition}{Proposition}[section]
\newtheorem{theorem}{Theorem}[section]
\newtheorem{lemma}{Lemma}[section]
\newtheorem{remark}{Remark}[section]

\newtheorem{definition}{Definition}[section]

\newcommand{\R}{\mathbb{R}}
\newcommand{\N}{\mathbb{N}}
\newcommand{\sam}{\mathbf{S}}

\allowdisplaybreaks

\title[The Follow-The-Leader scheme with non-monotone velocities]{The Follow-The-Leader scheme with non-monotone velocity}
\author{Marco Di Francesco}
\address{Dipartimento di Ingegneria e Scienze dell'Informazione e Matematica (DISIM), Università degli Studi dell'Aquila, Via Vetoio, Coppito, 67100 L'Aquila, Italy}
\email{marco.difrancesco@univaq.it}
\subjclass[2020]{35L65, 35Q70, 65M12, 90B20}

\keywords{Scalar conservation laws, follow-the-leader models, Hamilton--Jacobi equations, deterministic particle approximation, non-monotone velocity}

\date{July 2026}

\begin{document}

\begin{abstract}
    This paper addresses the approximation of entropy solutions to a one-dimensional scalar conservation law via the so-called Follow-the-Leader particle scheme in case the velocity map is \emph{non monotone}. The result is based on a discrete maximum principle, on $BV$ estimates of the approximated density, and on the characterization of the limit as entropy solution in the sense of Kru\v zkov.
\end{abstract}

\maketitle

\section{Introduction}

Nonlinear conservation laws of the form
\begin{equation}\label{eq:intro_CL}
    \partial_t \rho + \partial_x (\rho v(\rho))=0
\end{equation}
are omnipresent in physics, engineering, social sciences, and biology. We refer to \cite{dafermos_book} for a comprehensive introduction to the subject, and to the more recent \cite{rosini_book} for the applications to traffic and pedestrian movements. 

In most of these applications, the velocity field $v=v(\rho)$ is assumed to be a monotone function of the density $\rho$. This is the case, in particular, in traffic and pedestrian flow modeling, where $v$ is typically taken to be non-increasing, reflecting the empirical fact that higher congestion results in lower speed, as well known since \cite{Greenshields1935}. However, this monotonicity assumption is not always justified. Empirical studies of highway traffic reveal, for instance, a \enquote{synchronized flow} phase \cite{Kerner1998} in which the speed-density relation departs from monotone behavior. Similarly, in pedestrian dynamics \cite{Helbing1995}, self-organization phenomena such as lane formation in counterflows may locally increase the average walking speed at moderate densities compared to lower, less organized ones, before congestion dominates at higher densities. 

Conservation laws are known to produce solutions with discontinuities (shocks).
It is well-known since \cite{kruzkov} that in order to have a well-posed Cauchy problem for \eqref{eq:intro_CL} one needs to define \emph{entropy solutions}, that is, very roughly speaking, weak solutions that dissipate any convex functional around shocks. 

One of the many ways to construct entropy solutions to the Cauchy problem for \eqref{eq:intro_CL} is
via \emph{deterministic Lagrangian particles}. This method was rigorously applied for the first time in \cite{DR} in case the map $v$ is \emph{strictly monotone}. For example, in case $v$ is decreasing, the result in \cite{DR} states that, for a given $L^1(\R)\cap L^\infty(\R)$ nonnegative initial datum $\rho_0$ (with possible additional assumptions), given $\left\{\rho^N\right\}_{N\in \N}$ a suitable discrete density reconstruction of an $N$-particle system solving the ODE system
\begin{equation}\label{eq:intro_FTL_1}
    \dot{x}_i(t)=\begin{cases}
        v(R_{i+1}(t)) & \hbox{if $i\in\{0,\ldots,N-1\}$}\\
        v(0) & \hbox{if $i=N$}
    \end{cases}\qquad \hbox{with}\  R_i(t)=\frac{1}{N(x_i(t)-x_{i-1}(t))}
\end{equation}
with initial data obtained by suitably sampling the initial condition $\rho_0$, then $\rho^N$
converges strongly in $L^1_{\mathrm{loc}}([0,+\infty)\times \R)$ towards the unique entropy solution to \eqref{eq:intro_CL} with initial datum $\rho_0$ as $N\rightarrow+\infty$. The choice of the forward density $R_{i+1}$ in the ODE \eqref{eq:intro_FTL_1} has a quite natural interpretation in traffic flow, where it is the case that $v$ is decreasing. A vehicle will adjust its speed depending on the distance from the preceding vehicle, and completely ignore the following one. In fact, there is also a mathematical justification behind this choice, as we will explain later in this introductory chapter. 

The result in \cite{DR} holds for compactly supported initial data and assuming further either $\rho_0\in BV(\R)$ or that $\rho \mapsto \rho v'(\rho)$ is non-increasing. The assumption of compactly supported initial data was replaced by the assumption of $\rho_0$ having finite first moment in \cite{DFR}. The case with Dirichlet boundary conditions on an interval was considered in \cite{DFRR2}. The main result for the Cauchy problem was later proven by following an alternative strategy in \cite{HR1,HR2} in the case of initial data that are far from vacuum. The case of solutions with infinite mass, leading to infinitely many particles, was addressed in \cite{Marc}. The time-discrete version of \eqref{eq:intro_FTL_1} was considered in \cite{HR2} and later in \cite{DFIR}. This approach has been extended also to the Hughes model for pedestrian movements \cite{DFRR,ARS}, to the case of nonlocal scalar conservation laws for interacting particle systems \cite{DFRad,FR,RS}, and to the case of models on networks \cite{Card}. In the simplest case of a scalar conservation law, the many particle system \eqref{eq:intro_FTL_1} is often referred to as the \emph{follow-the-leader} model. We also recall similarities with nearest-neighbor deterministic schemes for linear and nonlinear diffusion models, see \cite{russo,gosse,matthes,DM}.

While the monotonicity of $v$ plays no significant role in the existence,  uniqueness and regularity theory for \eqref{eq:intro_CL} - a crucial property for that being, on the other hand, the convexity of the flux $f(\rho)=\rho v(\rho)$) - all the above mentioned results on deterministic particle approximations rely quite heavily on the assumption of $v$ being monotone. A problem that has remained open for some time is whether or not such monotonicity assumption on $v$ may be removed, at least in the case of the \enquote{homogeneous} conservation law \eqref{eq:intro_CL}. In this paper, we provide a positive answer to said problem. Clearly, the very same follow-the-leader model \eqref{eq:intro_FTL_1} cannot work if $v$ is non-monotone. It has been already observed in \cite{DR} that if $v$ is increasing the scheme \eqref{eq:intro_FTL_1} should be replaced by $\dot{x}_i(t)=v(R_i(t))$, that is, every particle $x_i$ is affected by the distance from the particle $x_{i-1}$. If the monotonicity of $v$ changes, one should switch from backward to forward values of the discrete densities $R_i(t)$ and vice-versa in the scheme depending on the values of those densities, which creates problems even to prove the well-posedness of the discrete scheme, let alone a uniform estimate. 

A new scheme has to be designed in case $v$ is non monotone. In this paper we propose the following one:
\begin{align}\label{eq:intro_FTL_2}
& \dot{x}_i(t)=
\begin{cases}
    \min_{R\in[0,R_1(t)]}v(R) & \hbox{if $i=0$}\\
    \begin{cases}
\displaystyle{\max_{R\in[R_{i+1}(t),R_i(t)]}v(R)} & \quad\hbox{if $R_i(t)\geq R_{i+1}(t)$}\\
        \displaystyle{\min_{R\in [R_i(t),R_{i+1}(t)]}v(R)} &\quad \hbox{if $R_i(t)<R_{i+1}(t)$}
    \end{cases} & \hbox{if $i\in \{1,\ldots,N-1\}$}\\
    \max_{R\in [0,R_N(t)]}v(R) & \hbox{if $i=N$}\,.
\end{cases}
\end{align}
The scheme \eqref{eq:intro_FTL_2} reduces to the standard scheme \eqref{eq:intro_FTL_1} in case $v$ is  decreasing (and to its "companion", backward looking model in case $v$ is increasing).

The genesis of scheme \eqref{eq:intro_FTL_2} comes from the analysis of the existing literature on rather well known techniques to approximate Hamilton-Jacobi type equations via finite differences, see \cite{bardi_osher,crandall_lions}. We next outline the link between a Follow-the-Leader type scheme and Hamilton-Jacobi type equations. As it was already noticed in \cite{DR}, for a given solution $\rho\geq 0$ to \eqref{eq:intro_CL} (which we assume to have unit mass for simplicity), the pseudo-inverse $X:[0,+\infty)\times [0,1]\rightarrow \R$ of the cumulative distribution function $F(x,t)=\int_{-\infty}^x \rho(t,y) dy$ with respect to the $x$-variable formally satisfies the PDE
\begin{equation}\label{eq:pseudo}
    \partial_t X(t,z)=v\left(\frac{1}{\partial_z X(t,z)}\right)\,,
\end{equation}
which is of Hamilton-Jacobi type. We stress that the variable $z\in [0,1]$ here represents a \emph{cumulative mass}, not a position in space. Assuming for simplicity that $v$ is decreasing, having solved \eqref{eq:intro_FTL_1} globally in time and having set
\[X^N(t,z)=\sum_{i=1}^N\left(x_{i-1}(t)+R_i(t)^{-1}\left(z-\frac{i-1}{N}\right)\right)\mathbf{1}_{[(i-1)/N,i/N)}(z)\,,\]
we observe that $X^N$ satisfies, for $z\in [(i-1)/N,i/N)$,
\begin{align*}
    & \partial_t X^N(t,z)=\left(1-(Nz-(i-1))\right)\dot{x}_{i-1}(t)+\left(Nz(i-1)\right)\dot{x}_i(t)=\\
    & \ = \left(1-(Nz-(i-1))\right)v\left((D^+ X^N(t,z))^{-1}\right) + \left(Nz(i-1)\right)v\left((D^+ X^N(t,i/N))^{-1}\right)\,.
\end{align*}
Note that the above right-hand side is a convex combination of $v(D^+ X(t,z))$ computed at $z$ and at its closest-to-the-right \enquote{grid point} $i/N$. In particular, the proposed particle approximation problem may (almost) be interpreted as a finite difference approximation (on a stationary grid) for the Hamilton-Jacobi type PDE \eqref{eq:pseudo}. 

For models such as \eqref{eq:pseudo}, the use of $\min$ and $\max$ in the definition of a suitable \enquote{numerical Hamiltonian} to design a proper finite difference scheme is quite standard. Such a remark naturally leads to formulate the scheme \eqref{eq:intro_FTL_2}. The scheme proposed here is also reminiscent of the Godunov scheme for scalar conservation laws, see \cite{leveque,HR_book}. Indeed, the aforementioned schemes for Hamilton-Jacobi equations can be formulated as a general Godunov-type scheme (for fluxes subject to no monotonicity or convexity assumptions) for the scalar conservation law obtained by differentiating \eqref{eq:pseudo} with respect to $z$, i.e.
\begin{equation}\label{eq:pseudo1}
    \partial_t (\partial_z X)=\partial_z \left(v(1/\partial_z X)\right)\,.
\end{equation}
The equation \eqref{eq:pseudo1} also leads to a deeper understanding of the role of the monotonicity of $v$ in the design of Follow-the-Leader schemes: setting $R(t,z)=\left(\partial_z X(t,z)\right)^{-1}$, \eqref{eq:pseudo1} formally leads to the conservation law in non conservative form
\[\partial_t R + R^2 v'(R)\partial_z R = 0\,,\]
and the upwind direction is determined by the sign of $v'$.

In the present paper we essentially follow the program in \cite{DR} (or, more precisely, its shorter version in \cite{DFR}) to prove the necessary estimates on the approximated density and to show that the scheme \eqref{eq:intro_FTL_2} suitably approximates entropy solutions to \eqref{eq:intro_CL}. In Section \ref{sec:scheme} we prove the well-posedness of the scheme \eqref{eq:intro_FTL_2}, by showing in particular a discrete maximum principle in the spirit of \cite[Lemma 1]{DR}. We then prove the scheme \eqref{eq:intro_FTL_2} is contractive in $BV$ for the approximated density
\[\rho^N(t,x)=\sum_{i=1}^N R_i(t)\mathbf{1}_{[x_{i-1}(t),x_i(t))}(x)\]
and satisfies a time equi-continuity property in $L^1$. These properties imply the strong $L^1$-compactness of the scheme. The full atomization scheme is presented in Section \ref{sec:entropy}, which also contains the main convergence result, i.e. Theorem \ref{thm:main}.

\section{The Follow-the-Leader model}\label{sec:scheme}
This section introduces the new version of Follow-the-Leader model proposed in this paper. The approximation of the conservation law \eqref{eq:intro_CL} is addressed in a later section. Here and throughout the whole paper, we shall prescribe the following standing assumption on the velocity map:
\begin{itemize}
    \item [(Vel)] $v:[0,+\infty)\rightarrow \R$ is locally Lipschitz continuous.
\end{itemize}
For a given $N\in \N$, let us define
\[\mathcal{K}_N=\left\{X=(x_0,\ldots,x_N)\in \R^{N+1}\,:\,\, \hbox{$x_i\leq x_{i+1}$ for all $i\in\{0,\ldots,N-1\}$}\right\}\,.
\]
We observe that $\mathcal{K}_N$ is closed in $\R^{N+1}$. We define the map $\mathcal{R}:\mathring{\mathcal{K}}_N\rightarrow [0,+\infty)^N$ by denoting, for all $X=(x_0,\ldots,x_N)\in \mathcal{K}_N$,
\begin{equation}\label{eq:R1}
    \mathcal{R}[X]=(R_1[X],\ldots,R_N[X])
\end{equation}
and by defining
\begin{equation}\label{eq:R2}
    R_i[X]=\frac{1}{N(x_i-x_{i-1})}\qquad \hbox{for all $i\in\{1,\ldots,N\}$}\,.
\end{equation}
Let $v:\R\rightarrow \R$ satisfy (Vel) above. For a given vector $\mathrm{R}=(R_1,\ldots,R_N)\in [0,+\infty)^N$, we define the velocity field $V[\mathrm{R}]\in \R^{N+1}$ as
\[V[\mathrm{R}]=(V_0[\mathrm{R}],\ldots,V_N[\mathrm{R}])\]
with
\begin{align*}
    & V_0[\mathrm{R}]=\min_{R\in [0,R_1]}v(R)\\
    & V_i[\mathrm{R}]=\begin{cases}
\displaystyle{\max_{R\in[R_{i+1},R_i]}v(R)} & \qquad\hbox{if $R_i\geq R_{i+1}$}\\
        \displaystyle{\min_{R\in [R_i,R_{i+1}]}v(R)} &\qquad \hbox{if $R_i<R_{i+1}$}
    \end{cases} \qquad\hbox{for $i\in \{1,\ldots,N-1\}$}\\
    & V_N[\mathrm{R}]=\max_{R\in [0,R_N]}v(R)
\end{align*}

\begin{lemma}\label{lem:max_min}
    Assume $v$ satisfies (Vel). Let $F:[0,+\infty)^2\rightarrow \R$ be defined by 
    \[F((a,b))=\begin{cases}
        \max_{c\in [b,a]}v(c) & \hbox{if $b\leq a$}\\
        \min_{c\in [a,b]}v(c) & \hbox{if $a<b$}
    \end{cases}\]
    Then, $F$ is a locally Lipschitz function.
\end{lemma}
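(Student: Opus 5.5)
Fix a compact set $K=[0,M]^2$ and let $L$ be the Lipschitz constant of $v$ on $[0,M]$. The plan is to prove directly that
\[|F(a,b)-F(a',b')|\le L\bigl(|a-a'|+|b-b'|\bigr)\qquad\text{for }(a,b),(a',b')\in K.\]
The main tool is the function
\[M(\alpha,\beta)=\max_{c\in[\min(\alpha,\beta),\max(\alpha,\beta)]}v(c),\]
and its analogue $m(\alpha,\beta)$ with the minimum in place of the maximum. Both are symmetric in their arguments.

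\emph{Step 1: $M$ and $m$ are Lipschitz on $K$.} Write $I=[\min(a,b),\max(a,b)]$ and $I'=[\min(a',b'),\max(a',b')]$. The endpoints of these intervals satisfy $|\min(a,b)-\min(a',b')|\le \delta$ and $|\max(a,b)-\max(a',b')|\le\delta$, where $\delta=\max(|a-a'|,|b-b'|)$. So every point of $I$ lies within distance $\delta$ of some point of $I'$, by projecting onto $I'$, and vice versa. If $c^*\in I$ is a maximizer for $M(a,b)$ and $c'\in I'$ is its projection, then $M(a',b')\ge v(c')\ge v(c^*)-L\delta$. Swapping the roles gives $|M(a,b)-M(a',b')|\le L\delta$, and the same argument handles $m$.

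\emph{Step 2: the gluing.} We have $F=M$ on the closed set $\{b\le a\}$ and $F=m$ on $\{a<b\}$. Along the interface $a=b$ the interval degenerates to a point, so $M(a,a)=m(a,a)=v(a)$. Hence $F$ coincides with $m$ on the closure $\{a\le b\}$ as well.

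Now take two points $P=(a,b)$ and $P'=(a',b')$ lying on opposite sides of the diagonal. The function $t\mapsto a_t-b_t$ along the segment $P_t=(1-t)P+tP'$ changes sign, so there is a $t^*$ at which $P_{t^*}$ lies on the diagonal. This point lies in $K$ by convexity. Then
\[|F(P)-F(P')|\le |F(P)-F(P_{t^*})|+|F(P_{t^*})-F(P')|,\]
and each term is controlled by Step 1 applied to the appropriate function, $M$ or $m$. Since the segment is straight, the lengths of the two pieces add up to the length of the whole, in both the $\ell^1$ and the max norm. This gives the constant $L$ (or $2L$ in the $\ell^1$ norm).

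The only delicate point is Step 1, namely the projection argument comparing extrema over two nearby intervals. The rest is the elementary gluing in Step 2.
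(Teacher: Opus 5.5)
Your proof is correct, but it is organized differently from the paper's.

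The paper argues one variable at a time. On each closed half-plane it notes that $a\mapsto F(a,b)$ and $b\mapsto F(a,b)$ are monotone envelopes of $v$ (for instance, $a\mapsto\min_{[a,b]}v$ is non-decreasing), hence Lipschitz. It then handles the one mixed situation, $(a_1,b)$ and $(a_2,b)$ on opposite sides of the diagonal with $b$ fixed, by writing $F(a_2,b)-F(a_1,b)=v(\hat a_2)-v(\hat a_1)$, where both optimizers lie in $[a_1,a_2]$. Joint Lipschitz continuity then follows implicitly by moving one coordinate at a time, which gives the constant $L$ in the $\ell^1$ norm.

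You instead prove joint Lipschitz continuity of the symmetric envelopes $M$ and $m$ in one step. The key observation is that the two intervals are within Hausdorff distance $\delta$ of each other. You then glue along the diagonal through a crossing point of the segment.

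Your Step 1 rests on a more general principle: extrema of an $L$-Lipschitz function over compact sets are $L$-Lipschitz with respect to Hausdorff distance. It also gives an explicit argument where the paper only asserts Lipschitz continuity through the ``hull'' remark. The paper's route is more hands-on, and in the crossing case it compares the two optimizers directly instead of passing through an intermediate diagonal point.

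One small correction to your closing remark. Since $\delta\le|a-a'|+|b-b'|$, your bound $L\delta$ already gives the constant $L$, not $2L$, in the $\ell^1$ norm. That is exactly the inequality you announced at the start, and your max-norm bound is in fact slightly sharper than the paper's.
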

The proof of Lemma \ref{lem:max_min} is provided in the Appendix. We observe that
\begin{align*}
    & V_0[\mathrm{R}]=F(0,R_1)\\
    & V_i[\mathrm{R}]=F(R_i,R_{i+1})\qquad \hbox{for $i\in\{1,\ldots,N-1\}$}\\
    & V_N[\mathrm{R}]=F(0,R_N)\,.
\end{align*}
Hence, Lemma \ref{lem:max_min} and the assumption (Vel) imply that the map $V:[0,+\infty)^N\rightarrow \R^{N+1}$ is locally Lipschitz. Therefore, the map $V\circ \mathcal{R}:\mathring{\mathcal{K}}_N\rightarrow \R^{N+1}$ is locally Lipschitz continuous. Hence, if we fix $X^0=(x_0^0,\ldots,x_N^0)\in \mathring{\mathcal{K}}_N$, the Cauchy problem
\begin{equation}\label{eq:FTL_cauchy}
\begin{cases}
     \dot{X}(t)=V[\mathcal{R}[X(t)]] & \\
 X(0)=X^0
\end{cases}
\end{equation}
admits a unique local-in-time solution by standard Cauchy-Lipschitz theory. For the reader's convenience, we observe that the Cauchy problem \eqref{eq:FTL_cauchy} reads, in its $N+1$ components, as follows
\begin{align}
    & \dot{x}_0(t)=\min_{R\in[0,R_1(t)]}v(R) \nonumber\\
    & \dot{x}_i(t)=
    \begin{cases}
\displaystyle{\max_{R\in[R_{i+1}(t),R_i(t)]}v(R)} & \qquad\hbox{if $R_i(t)\geq R_{i+1}(t)$}\\
        \displaystyle{\min_{R\in [R_i(t),R_{i+1}(t)]}v(R)} &\qquad \hbox{if $R_i(t)<R_{i+1}(t)$}
    \end{cases} \qquad \hbox{for $i\in \{1,\ldots,N-1\}$}\label{eq:FTL_components}\\
   & \dot{x}_N(t)=\max_{R\in [0,R_N(t)]}v(R)\,.\nonumber
\end{align}
Here we adopt the notation
\begin{equation}\label{eq:short_R}
R_i(t)=R_i[X(t)]=\frac{1}{N(x_i(t)-x_{i-1}(t))}\qquad \hbox{for $i=1,\ldots,N$}\,,
\end{equation}
which slightly differs from the one introduced in \cite{DR}. We believe this new notation is more natural than the one used in \cite{DR}: since there are exactly $N$ discrete densities, it is natural to denote them by $R_1,\ldots,R_N$ (differently from \cite{DR}, in which they were denoted with indexes running from $0$ to $N-1$).
\begin{remark}
\emph{
    In case $v$ is monotone, the expressions on the right hand side of \eqref{eq:FTL_components} simplify significantly. For example, in case $v$ is non-increasing, for $R_i\geq R_{i+1}$ the maximum of $v$ on $[R_{i+1},R_i]$ is achieved on $R_{i+1}$; for $R_i<R_{i+1}$, the minimum of $v$ on $[R_i,R_{i+1}]$ is still achieved on $R_{i+1}$. Hence, we obtain the \enquote{standard} Follow-the-Leader ODE
    \[\dot{x}_i(t)=v(R_{i+1}(t))\qquad \hbox{ for $i=0,\ldots,N$}\]
    adopted in \cite{DR}, with the convention $R_{N+1}(t)=0$. 
    Similarly, one can prove that in case $v$ is non-decreasing, \eqref{eq:FTL_components} becomes
    \[\dot{x}_i(t)=v(R_i(t))\qquad \qquad \hbox{ for $i=0,\ldots,N$}\]
    with the convention $R_0(t)=0$.
}
\end{remark}

We now prove a discrete maximum principle for this scheme, which implies, as a byproduct, the global-in-time existence of the unique solution to \eqref{eq:FTL_cauchy}.

\begin{theorem}[Discrete Maximum Principle]\label{thm:MP}
    Let 
    \begin{equation}\label{eq:R_bar}
        \overline{R}=\max_{i\in\{1,\ldots,N\}}R_i(0)= \frac{1}{N\min_{i\in\{1,\ldots,N\}}(x_i^0-x_{i-1}^0)}\,.\
    \end{equation}
    Then, the solution $X(t)=(x_0(t),\ldots,x_N(t))$ to the Cauchy problem \eqref{eq:FTL_cauchy} exists globally in time and satisfies
    \begin{equation}\label{eq:MP}
        R_i(t)\leq \overline{R}\qquad \hbox{for all $i\in\{1,\ldots,N\}$ and for all $t\geq 0$}.
    \end{equation}
\end{theorem}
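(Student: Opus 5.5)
Our plan is a continuation argument based on the sign of $\dot R_i$ at the moment the largest density reaches the threshold, in the spirit of \cite[Lemma 1]{DR}. Let $T^*\in(0,+\infty]$ be the maximal existence time of the local solution to \eqref{eq:FTL_cauchy} in $\mathring{\mathcal{K}}_N$. Since the right-hand side of \eqref{eq:FTL_cauchy} is locally Lipschitz on $\mathring{\mathcal{K}}_N$, if $T^*<+\infty$ then the solution must leave every compact subset of $\mathring{\mathcal{K}}_N$ as $t\to T^*$. This can only happen if either some distance $x_i-x_{i-1}$ tends to zero or $|X(t)|$ blows up.

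Once \eqref{eq:MP} is proven on $[0,T^*)$, we have $x_i-x_{i-1}\geq 1/(N\overline{R})$, which rules out the first scenario. For the second, all $R_i$ lie in $[0,\overline{R}]$, and each $V_i$ is a max or min of $v$ over a subinterval of $[0,\overline{R}]$. Hence $|\dot x_i|\leq \max_{[0,\overline{R}]}|v|$, so $X$ grows at most linearly and cannot blow up. Thus global existence reduces to proving \eqref{eq:MP} on $[0,T^*)$.

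For the maximum principle we compute
\[
\dot R_i=-N R_i^2(\dot x_i-\dot x_{i-1}),
\]
so $R_i$ is non-increasing whenever $\dot x_i\geq \dot x_{i-1}$. The key structural observation is the following. Suppose $R_i\geq R_j$ for the neighbours $j=i\pm1$, with the conventions $R_0=R_{N+1}=0$ at the boundary particles, which matches the definitions of $V_0=F(0,R_1)$ and $V_N=F(0,R_N)$ up to the orientation. Then:
\begin{itemize}
\item the right particle has $\dot x_i=F(R_i,R_{i+1})=\max_{[R_{i+1},R_i]}v\geq v(R_i)$;
\item the left particle has $\dot x_{i-1}=F(R_{i-1},R_i)=\min_{[R_{i-1},R_i]}v\leq v(R_i)$ if $R_{i-1}<R_i$, while if $R_{i-1}=R_i$ it equals $\max_{\{R_i\}}v=v(R_i)$.
\end{itemize}
Either way $\dot x_{i-1}\leq v(R_i)\leq\dot x_i$, so $\dot R_i\leq0$ at any local-maximum index. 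For $i=1$ we use $\dot x_0=\min_{[0,R_1]}v\leq v(R_1)$, and for $i=N$ we use $\dot x_N=\max_{[0,R_N]}v\geq v(R_N)$.

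To turn this into the bound, set $M(t)=\max_i R_i(t)$, which is Lipschitz on compact subintervals of $[0,T^*)$. At a.e.\ $t$, its derivative equals $\dot R_i(t)$ for some index $i$ attaining the maximum (Danskin-type argument for a max of finitely many $C^1$ functions). Such an $i$ is a local maximum among neighbours, so $\dot M\leq 0$ a.e. Hence $M(t)\leq M(0)=\overline{R}$.

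The main obstacle is making this step rigorous, since the right-hand sides are only Lipschitz, so the $x_i$ are $C^1$ but the argmax may switch. The a.e.-differentiability of $M$ together with the identification $\dot M(t)=\dot R_i(t)$ for any maximizing $i$ at differentiability points of both handles this. This works because at a differentiability point $t$ of $M$, for a maximizing $i$ the function $R_i-M\leq0$ attains its maximum $0$ at $t$, forcing the derivatives to agree. If one wants to avoid this, an alternative is a first-crossing argument for $\overline{R}+\varepsilon$: at the first time some $R_i$ hits $\overline{R}+\varepsilon$, that index is a global maximum and $\dot R_i\leq0$. That is a weak inequality, so one perturbs with $\varepsilon e^{t}$ or uses the strict local Lipschitz bound to conclude, and then lets $\varepsilon\to0$.
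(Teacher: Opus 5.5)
Your proof is correct, and it takes a genuinely different route from the paper's.

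The paper argues by contradiction. It fixes a time $t^*$ at which a block of consecutive densities $R_h,\ldots,R_i$ sits at $\overline{R}$ and then exceeds it, while the neighbours $R_{h-1},R_{i+1}$ stay $\le\overline{R}$. It shows $\dot x_i-\dot x_{h-1}\ge 0$ by comparing the max of $v$ on $[R_{i+1},R_i]$ and the min on $[R_{h-1},R_h]$ over their common subinterval. Integrating gives $x_i-x_{h-1}\ge (i-h+1)/(N\overline{R})$, and pigeonhole then forces some $R_j$ in the block to be $\le\overline{R}$.

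You instead prove a pointwise fact: $\dot x_{i-1}\le v(R_i)\le\dot x_i$ at any index with $R_i\ge R_{i\pm1}$. You correctly treat the tie $R_{i-1}=R_i$ and the boundary particles. You then show that $M(t)=\max_i R_i(t)$ is non-increasing, via the envelope identity $\dot M=\dot R_i$ at differentiability points.

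What each route buys:
\begin{itemize}
\item The paper's block argument absorbs adjacent ties into one integrated inequality and avoids non-smooth calculus.
\item Your argument handles ties for free, since any maximiser is a weak local maximum.
\item More importantly, you do not need the paper's setup, which assumes without justification that on a common right neighbourhood of $t^*$ each relevant $R_j$ is either strictly above $\overline{R}$ or at most $\overline{R}$. For $C^1$ trajectories of a merely Lipschitz field, the $R_j$ could in principle oscillate around $\overline{R}$, so this is not automatic.
\item Your global-existence step is also more complete: the bound $|\dot x_i|\le\max_{[0,\overline{R}]}|v|$ rules out escape to infinity, a point the paper leaves implicit.
\end{itemize}

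Your fallback with the barrier $\overline{R}+\varepsilon e^{t}$ is sound as well. At the first touching time the touching index is a global maximiser, so $\dot R_i\le 0$, whereas touching from below requires $\dot R_i\ge\varepsilon e^{t}>0$.
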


\proof
We argue by contradiction and assume that the condition in \eqref{eq:MP} gets violated. Assume, therefore, that there exists a time $t^*\geq 0$ within the local existence lifetime of the solution $X(t)$ such that, for some $i\in \{1,\ldots,N\}$, $R_i(t^*)=\overline{R}$, and for some $\varepsilon>0$ one has $R_i(t)>\overline{R}$ for all $t\in (t^*,t^*+\varepsilon)$. Clearly, there might be more than just index $i$ with this property. To cover the most possible general case, we must assume there exist $h,i\in \{1,\ldots,N\}$, with $h\leq i$, such that $R_j(t^*)=\overline{R}$ and $R(t)>\overline{R}$ for $t\in(t^*,t^*+\varepsilon_j)$ for some $\varepsilon_j>0$, for all $j\in \{h,\ldots,i\}$. Without restriction, we may assume $\varepsilon_j=\varepsilon>0$ for all $j\in \{h,\ldots,i\}$. We then assume that $R_{i+1}(t)\leq \overline{R}$ and $R_{h-1}(t)\leq \overline{R}$ for all $t\in [t^*,t^*+\varepsilon)$. Notice that the case $h=1$ is covered by using the convention $R_0(t)=0$. Similarly, the case $i=N$ is covered by using the convention $R_{N+1}(t)=0$.

We compute, for $t\in (t^*,t^*+\varepsilon)$,
\begin{align*}
    & x_{i}(t)-x_{h-1}(t)=\sum_{j=h-1}^{i-1}\left(x_{j+1}(t)-x_j(t)\right)\\
    & \ = \sum_{j=h-1}^{i-1}\left[x_{j+1}(t^*)-x_j(t^*)+\int_{t^*}^t\left(\dot{x}_{j+1}(s)-\dot{x}_j(s)\right)ds\right]\\
    & \ = \frac{i-h+1}{N\overline{R}}+\int_{t^*}^t\left(\dot{x}_{i}(s)-\dot{x}_{h-1}(s)\right)ds\\
    & \ = \frac{i-h+1}{N\overline{R}}+\int_{t^*}^t\left(\max_{R\in [R_{i+1}(s),R_i(s)]}v(R)-\min_{R\in [R_{h-1}(s),R_h(s)]}v(R)\right)ds\,,
\end{align*}
where we have used
\begin{align*}
    & R_{h-1}(s)\leq \overline{R}<R_h(s)\\
    & R_{i+1}(s)\leq \overline{R}<R_i(s)
\end{align*}
for all $s\in (t^*,t^*+\varepsilon)$. Now, we observe
\begin{align*}
    & \max_{R\in [R_{i+1}(s),R_i(s)]}v(R)\geq \max_{R\in[\max\{R_{i+1}(s),R_{h-1}(s)\},\min\{R_i(s),R_h(s)\}]} v(R)
\end{align*}
and
\begin{align*}
    & \min_{R\in [R_{h-1}(s),R_h(s)]}v(R)\leq \min_{R\in[\max\{R_{i+1}(s),R_{h-1}(s)\},\min\{R_i(s),R_h(s)\}]}v(R)\,,
\end{align*}
which implies the term under the integral sign is nonnegative and we obtain, for all $t\in (t^*,t^*+\varepsilon)$,
\begin{equation}\label{eq:MP_contr_1}
    x_{i}(t)-x_{h-1}(t)\geq \frac{i-h+1}{N\overline{R}}\,.
\end{equation}
The inequality \eqref{eq:MP_contr_1} implies there exists at least one index $j\in\{h,\ldots,i\}$ such that
\[x_{j}(t)-x_{j-1}(t)\geq \frac{1}{N\overline{R}}\,,\]
which implies $R_j(t)\leq \overline{R}$ for all $t\in (t^*,t^*+\varepsilon)$ for some $j\in\{h,\ldots,i\}$, a contradiction. This proves \eqref{eq:MP}. Consequently, two consecutive particles $x_{i-1}(t)$ and $x_{i}(t)$ are distant at least $1/(N\overline{R})$, which implies that the solution $X(t)$ always stays away from the boundary of the closed cone $\mathcal{K}_N$. This implies the solution $X(t)$ can be extended for all times.
\endproof

\section{$BV$ contraction and $L^1$ time equi-continuity}

In this section we still work with the finite dimensional system \eqref{eq:FTL_components} without establishing any connection whatsoever  with the PDE \eqref{eq:intro_CL}. 

Let us now introduce the notation
\begin{equation}\label{eq:density_approx}
    \rho^N(t,x)=\sum_{i=1}^{N}R_i[X(t)]\mathbf{1}_{[x_{i-1}(t),x_i(t))}(x)\,.
\end{equation}
The goal of this section is to prove that the scheme \eqref{eq:FTL_components} is non expansive in the total variation of the discrete density $\rho^N$ and it features an equi-continuity property with respect to time. We observe that
\[
\int_{\R}\rho^N(t,x) dx =1\qquad \hbox{for all $t\geq 0$.}
\]
We recall the following expression for the total variation of $\rho^N(t,\cdot)$ at some fixed time $t\geq 0$:
\[TV[\rho^N(t,\cdot)]=R_1(t)+\sum_{i=1}^{N-1}|R_{i+1}(t)-R_i(t)| + R_N(t)\]
with the usual notation \eqref{eq:short_R}. 
We prove that the above quantity does not grow in time.

\begin{theorem}\label{thm:TV}
    For all $t\geq 0$,
    \begin{equation}\label{eq:TV}
        TV[\rho^N(t,\cdot)]\leq TV[\rho^N(0,\cdot)]\,.
    \end{equation}
\end{theorem}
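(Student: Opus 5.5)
We plan to show that $t\mapsto TV[\rho^N(t,\cdot)]$ is non-increasing by differentiating it at almost every time. The ingredients are the ODE \eqref{eq:FTL_components} and the structure of the numerical flux $F$ from Lemma \ref{lem:max_min}.

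\textbf{Setup.} Set $R_0\equiv R_{N+1}\equiv 0$, so that $TV=\sum_{i=0}^{N}|R_{i+1}-R_i|$. Write $V_i=F(R_i,R_{i+1})$ for $i=0,\ldots,N$; the boundary velocities agree with this, since $V_0=F(0,R_1)$ and $V_N=F(R_N,0)$. Each $R_i$ is locally Lipschitz in time, by Theorem \ref{thm:MP} and the Lipschitz continuity of the flow. Hence $TV$ is locally Lipschitz, and it suffices to show $\frac{d}{dt}TV\le 0$ at a.e. $t$. Since $R_i=1/(N(x_i-x_{i-1}))$,
\[
\dot R_i=-N R_i^2\,(V_i-V_{i-1})\,,
\]
which is the particle analogue of the Lagrangian equation \eqref{eq:pseudo1}. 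At a.e. $t$ we also have
\[
\frac{d}{dt}|R_{i+1}-R_i|=\sigma_i(\dot R_{i+1}-\dot R_i)\,,
\]
where $\sigma_i=\mathrm{sign}(R_{i+1}-R_i)$. On the set where $R_{i+1}=R_i$ on a time interval, both the difference and its derivative vanish, so the formula holds with any choice of $\sigma_i\in[-1,1]$.

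\textbf{Reduction to local extrema.} We group the sum into maximal monotone runs of the sequence $R_0,R_1,\ldots,R_{N+1}$, in the spirit of the standard TVD proof for Godunov/monotone schemes. Summation by parts gives
\[
\frac{d}{dt}TV=\sum_{i=1}^{N}\dot R_i\,(\sigma_{i-1}-\sigma_i)\,.
\]
Only indices $i$ at which $R_i$ is a strict local extremum contribute, with weight $\pm 2$. Plateaus of equal values are treated as a single block, exactly as in the proof of Theorem \ref{thm:MP}. Two cases remain.

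\begin{itemize}
\item If $R_i$ is a local maximum ($R_{i-1}\le R_i\ge R_{i+1}$), the contribution is $2\dot R_i=-2NR_i^2(V_i-V_{i-1})$. Here $V_i=\max_{[R_{i+1},R_i]}v\ge v(R_i)$ and $V_{i-1}=\min_{[R_{i-1},R_i]}v\le v(R_i)$. So $V_i-V_{i-1}\ge0$ and the contribution is $\le0$.
\item If $R_i$ is a local minimum, then symmetrically $V_i=\min_{[R_i,R_{i+1}]}v\le v(R_i)\le\max_{[R_i,R_{i-1}]}v=V_{i-1}$. So $\dot R_i\ge0$, and the contribution $-2\dot R_i$ is again $\le0$.
\end{itemize}

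For a plateau block $R_h=\cdots=R_i$ forming a local extremum, we argue with the block length rather than individual densities, as in \eqref{eq:MP_contr_1}. The comparison of the max/min over overlapping intervals used in Theorem \ref{thm:MP} shows that $x_i-x_{h-1}$ moves in the direction that decreases the variation. Summing over all extrema gives $\frac{d}{dt}TV\le0$ a.e., and integration yields \eqref{eq:TV}.

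\textbf{Main obstacle.} The hard step is making the plateau and extremum bookkeeping rigorous at times where ties $R_i=R_{i+1}$ appear or disappear, so that $\sigma_i$ is ill-defined. The first thing to try is to work at a.e. $t$ where every $R_i$ is differentiable. At such a time, for each pair with $R_i(t)=R_{i+1}(t)$, either the pair stays tied on a neighborhood of $t$ (so the derivatives agree and any $\sigma_i$ works), or $t$ is an isolated tie time; isolated tie times form a countable set and can be ignored. If this proves delicate, a fallback is to prove the estimate separately on intervals where the ordering pattern of $(R_i)$ is fixed, and conclude by continuity of $TV$.
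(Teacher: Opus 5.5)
Your argument is essentially the paper's proof. You use the same identity $\dot R_i=-NR_i^2(\dot x_i-\dot x_{i-1})$, the same summation by parts producing the weights $\sigma_{i-1}-\sigma_i$, and the same sign analysis at local maxima and minima by comparing the max and min of $v$ over adjacent intervals. You pivot on $v(R_i)$ where the paper uses overlapping subintervals. You also absorb the paper's separate boundary terms $I_1$, $I_N$ into the interior analysis through $R_0=R_{N+1}=0$, correctly writing $V_N=F(R_N,0)$. Your attention to ties and plateaus goes beyond the paper, which simply uses $\mathrm{sign}(R_{i+1}-R_i)=-1$ whenever $R_i\ge R_{i+1}$.

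One step in your last paragraph is wrong, however. Let $g=R_{i+1}-R_i$, which is $C^1$ because the $x_i$ are $C^1$ and stay separated. At a tie time $t$ it is not true that either $g$ vanishes on a neighbourhood of $t$ or $t$ is an isolated zero. Nothing in your argument rules out a zero set that is closed, nowhere dense and of positive measure, and most of its points are neither interior nor isolated. Your fallback fails for the same reason. The set where the ordering pattern is locally constant is open and dense, but its complement may have positive measure. A Lipschitz function can increase on such a complement while being non-increasing on every complementary interval.

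The correct dichotomy is \emph{isolated zero} (countably many) versus \emph{accumulation point of zeros}. At an accumulation point, $g'(t)=0$ because the difference quotients along the zeros vanish, and then $|g(s)|=o(|s-t|)$, so $|g|$ is differentiable at $t$ with derivative $0$. Hence $g'=0$ a.e.\ on $\{g=0\}$. This gives two things you need:
\begin{itemize}
\item the identity $\frac{d}{dt}|R_{i+1}-R_i|=\sigma_i(\dot R_{i+1}-\dot R_i)$ a.e., with arbitrary $\sigma_i$ at ties;
\item the equality $\dot R_h=\cdots=\dot R_i$ a.e.\ along a plateau, which your block argument requires.
\end{itemize}
With this fix the proof is complete.
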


\proof
We compute
\begin{align*}
    & \frac{d}{dt} TV[\rho^N(t,\cdot)] = \dot{R}_1(t)+\sum_{i=1}^{N-1}\mathrm{sign}(R_{i+1}(t)-R_i(t))(\dot{R}_{i+1}(t)-\dot{R}_i(t)) + \dot{R}_N(t)\\
    & \ = (1-\mathrm{sign}(R_2(t)-R_1(t))\dot{R}_1(t)+\sum_{i=2}^{N-1}\left(\mathrm{sign}(R_i(t)-R_{i-1}(t))-\mathrm{sign}(R_{i+1}(t)-R_i(t))\right)\dot{R}_i(t)\\
    & \ + (1+\mathrm{sign}(R_N(t)-R_{N-1}(t))\dot{R}_N(t)\,.
\end{align*}
We now analyze the term
\begin{align*}
    & I_i(t)=\left(\mathrm{sign}(R_i(t)-R_{i-1}(t))-\mathrm{sign}(R_{i+1}(t)-R_i(t))\right)\dot{R}_i(t)\\
    & \ = -N R_i(t)^2\left(\mathrm{sign}(R_i(t)-R_{i-1}(t))-\mathrm{sign}(R_{i+1}(t)-R_i(t))\right)(\dot{x}_i(t)-\dot{x}_{i-1}(t))\,.
\end{align*}
Let us first consider the case $R_i(t)\geq R_{i+1}(t)$. In this case, \[\dot{x}_i(t)=\max_{R\in[R_{i+1}(t),R_i(t)]}v(R)\,.\]
Since $\mathrm{sign}(R_{i+1}(t)-R_i(t))=-1$, the only case in which $I_i(t)\neq 0$ is the one in which $R_i(t)\geq R_{i-1}(t)$, which implies
\[\dot{x}_{i-1}(t)=\min_{R\in[R_{i-1}(t),R_{i}(t)]}v(R)\,.\]
Hence, in this case we have
\begin{align*}
    & \dot{x}_i(t)-\dot{x}_{i-1}(t)\geq \max_{R\in[\max\{R_{i+1}(t),R_{i-1}(t)\},R_i(t)]}v(R)-\min_{R\in[\max\{R_{i+1}(t),R_{i-1}(t)\},R_i(t)]}v(R)\geq 0\,,
\end{align*}
which implies $I_i(t)\geq 0$. Let us now consider the case $R_i(t)<R_{i+1}(t)$. In this case,
\[\dot{x}_i(t)=\min_{R\in[R_{i}(t),R_{i+1}(t)]}v(R)\,.\]
Since $\mathrm{sign}(R_{i+1}(t)-R_i(t))=1$, the only case in which $I_i(t)\neq 0$ is the one in which $R_i(t)\leq R_{i-1}(t)$, which implies
\[\dot{x}_{i-1}(t)=\max_{R\in[R_{i}(t),R_{i-1}(t)]}v(R)\,.\]
Therefore, in this case we have
\begin{align*}
    & \dot{x}_i(t)-\dot{x}_{i-1}(t)\leq \min_{R\in[R_i(t),\min\{R_{i-1}(t),R_{i+1}(t)\}]}v(R)-\max_{R\in[R_i(t),\min\{R_{i-1}(t),R_{i+1}(t)\}]}v(R)\leq 0\,,
\end{align*}
which yields $I_i(t)\leq 0$. We now analyze the term
\begin{align*}
    & I_1(t)=(1-\mathrm{sign}(R_2(t)-R_1(t))\dot{R}_1(t)\\
    & \ = -NR_1(t)^2 (1-\mathrm{sign}(R_2(t)-R_1(t))(\dot{x}_1(t)-\dot{x}_0(t))\\
    & \ = -NR_1(t)^2 (1-\mathrm{sign}(R_2(t)-R_1(t))\left(\dot{x}_1(t)-\min_{R\in[0,R_1(t)]}v(R)\right)\,.
\end{align*}
If $R_2(t)>R_1(t)$, then $I_1(t)=0$. If $R_2(t)\leq R_1(t)$, then
\[\dot{x}_1(t)=\max_{R\in[R_2(t),R_1(t)]}v(R)\,,\]
which implies $I_1(t)\leq 0$ upon observing that
\[\min_{R\in[0,R_1(t)]}v(R)\leq \min_{R\in[R_2(t),R_1(t)]}v(R)\,.\]
Finally, we analyze the term 
\begin{align*}
    & I_N(t)=(1+\mathrm{sign}(R_N(t)-R_{N-1}(t))\dot{R}_N(t)\\
    & \ = -NR_N(t)^2(1+\mathrm{sign}(R_N(t)-R_{N-1}(t))(\dot{x}_N(t)-\dot{x}_{N-1}(t))\\
    & \ = -NR_N(t)^2(1+\mathrm{sign}(R_N(t)-R_{N-1}(t))(\max_{R\in [0,R_N(t)]}v(R)-\dot{x}_{N-1}(t))\,.
\end{align*}
If $R_{N-1}(t)>R_N(t)$ then $I_N(t)=0$. If, otherwise, $R_N(t)\geq R_{N-1}(t)$, we have
\[\dot{x}_{N-1}=\min_{R\in [R_{N-1}(t),R_N(t)]}v(R)\,.\]
Since
\[\max_{R\in [0,R_N(t)]}v(R)\geq \max_{R\in [R_{N-1}(t),R_N(t)]}v(R)\,,\]
we have $I_N(t)\leq 0$.
\endproof

The next lemma will be useful in the sequel.
\begin{lemma}
    For all $t\geq 0$, there holds
    \begin{align}
       & |\dot{x}_0(t)|+\sum_{i=1}^N|\dot{x}_i(t)-\dot{x}_{i-1}(t)| + |\dot{x}_N(t)|\leq 2v(0)+2[v]_{\mathrm{Lip}([0,\overline{R}])} TV[\rho^N(t,\cdot)]\,,\label{eq:est_vel_BV}\\
       & \sum_{i=1}^N|v(R_i(t))-\dot{x}_{i-1}(t)|\leq [v]_{\mathrm{Lip}([0,\overline{R}])}TV[\rho^N(t,\cdot)]\label{eq:est_vel_BV_2}
    \end{align}
with $\overline{R}$ defined in \eqref{eq:R_bar}
\end{lemma}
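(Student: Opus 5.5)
The plan is to write every particle velocity as $v$ evaluated at a point between neighbouring densities, and then control all the differences by the Lipschitz constant of $v$ on $[0,\overline{R}]$ together with Theorem \ref{thm:MP}. By definition, each $\dot{x}_i(t)$ equals $v(c_i)$ for some $c_i$ in the closed interval with endpoints $R_i(t)$ and $R_{i+1}(t)$, using the conventions $R_0=R_{N+1}=0$. This holds both in the max case and in the min case, since an extremum of the continuous function $v$ over a compact interval is attained. By Theorem \ref{thm:MP}, all these intervals lie in $[0,\overline{R}]$, so $v$ is Lipschitz there with constant $L:=[v]_{\mathrm{Lip}([0,\overline{R}])}$.

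For \eqref{eq:est_vel_BV_2}, I would fix $i$ and note that $c_{i-1}$ lies between $R_{i-1}$ and $R_i$. Hence
\[|v(R_i)-\dot{x}_{i-1}| = |v(R_i)-v(c_{i-1})|\leq L|R_i-c_{i-1}|\leq L|R_i-R_{i-1}|.\]
Summing over $i=1,\ldots,N$ gives $L\big(R_1+\sum_{i=2}^N|R_i-R_{i-1}|\big)$, which is at most $L\,TV[\rho^N(t,\cdot)]$ because the term $R_N\geq 0$ of $TV$ has simply been dropped.

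For \eqref{eq:est_vel_BV}, I would handle the boundary terms and the increments separately.

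\emph{Boundary terms.} We have $|\dot{x}_0|=|v(c_0)|\leq |v(0)|+L c_0\leq |v(0)|+LR_1$, and similarly $|\dot{x}_N|\leq |v(0)|+LR_N$. The statement writes $v(0)$, which I read as $|v(0)|$ (or one assumes $v(0)\geq 0$).

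\emph{Increments.} For $i=1,\ldots,N$, $c_{i-1}$ lies between $R_{i-1}$ and $R_i$, and $c_i$ lies between $R_i$ and $R_{i+1}$. By the triangle inequality through $R_i$,
\[|\dot{x}_i-\dot{x}_{i-1}|\leq L\big(|c_i-R_i|+|R_i-c_{i-1}|\big)\leq L\big(|R_{i+1}-R_i|+|R_i-R_{i-1}|\big).\]
Summing over $i=1,\ldots,N$, each difference $|R_{j+1}-R_j|$ for $j=0,\ldots,N$ appears at most twice, with $|R_1-R_0|=R_1$ and $|R_{N+1}-R_N|=R_N$. This gives $2L\,TV[\rho^N(t,\cdot)]$.

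Adding the boundary terms yields $2|v(0)|+L(R_1+R_N)+2L\,TV$. This exceeds the stated bound by $L(R_1+R_N)$, so I would need to absorb these boundary terms. The first option is to use the finer counting: in the sum of increments, the endpoint differences $R_1$ and $R_N$ appear only once (from $i=1$ and $i=N$, respectively), while the interior differences appear twice. The increment sum is therefore at most $L\big(R_1+R_N+2\sum_{j=1}^{N-1}|R_{j+1}-R_j|\big)$. Adding the boundary terms then gives exactly $2|v(0)|+2L\,TV$.

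The main obstacle is this bookkeeping of the endpoint contributions, so that the constant $2$ in front of $TV$ is not exceeded. The finer counting above resolves it, and the rest of the argument is routine.
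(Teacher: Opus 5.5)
Your proof is correct and follows essentially the same route as the paper. In both, each velocity is written as $v$ at an intermediate point, each increment is split through $R_i$ and bounded using the Lipschitz constant on $[0,\overline{R}]$ from Theorem \ref{thm:MP}, and because $R_1$ and $R_N$ appear only once in the increment sum, they combine with the boundary terms to give exactly $2[v]_{\mathrm{Lip}([0,\overline{R}])}TV[\rho^N(t,\cdot)]$. Your reading of $2v(0)$ as $2|v(0)|$ is what the paper's proof actually establishes, and it is needed: for $v\equiv -1$ the bound with $2v(0)$ fails.
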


\proof
We rewrite \eqref{eq:FTL_components} as
\[\dot{x}_i(t)= \begin{cases}
    v(\hat{R}_{i,i+i}(t)) & i\in\{1,\ldots,N-1\}\\
    v(\hat{R}_1(t)) & i=0\\
    v(\hat{R}_N(t)) & i=N
\end{cases}\]
where $\hat{R}_{i,i+1}(t)$ is the optimal values computed in the single components of \eqref{eq:FTL_components} for $i\in\{1,\ldots,N-1\}$, $\hat{R}_1(t)$ is the minimal value computed in $[0,R_i(t)]$ for $\dot{x}_0(t)$, and $\hat{R}_N(t)$ is the maximal value computed in $[0,R_N(t)]$ for $\dot{x}_N(t)$. All the above optimal values are always achieved since $v$ is continuous. They satisfy
\begin{align*}
    & \max\{|\hat{R}_{i,i+1}(t)-R_i(t)|+|\hat{R}_{i,i+1}(t)-R_{i+1}(t)|\}\leq |R_{i+1}(t)-R_i(t)|\qquad \hbox{for $i=1,\ldots,N-1$}\,,\\
    & \hat{R}_1(t)\leq R_1(t)\,,\\
    & \hat{R}_N(t)\leq R_N(t)\,.
\end{align*}
Hence,
    \begin{align*}
        & |\dot{x}_0(t)|+\sum_{i=1}^N|\dot{x}_i(t)-\dot{x}_{i-1}(t)| + |\dot{x}_N(t)|\\
    & \ \leq |v(\hat{R}_{1})|+\sum_{i=1}^N|v(\hat{R}_{i,i+1}(t))-v(\hat{R}_{i-1,i}(t))|+ |v(\hat{R}_N(t))|\\
    & \ \leq 2|v(0)|+[v]_{\mathrm{Lip}([0,\overline{R}])}\hat{R}_1(t)+ [v]_{\mathrm{Lip}([0,\overline{R}])}\sum_{i=1}^N|\hat{R}_{i,i+1}(t)-\hat{R}_{i-1,t}(t)| + [v]_{\mathrm{Lip}([0,\overline{R}])}\hat{R}_N(t)\\
    & \ \leq 2|v(0)|+[v]_{\mathrm{Lip}([0,\overline{R}])}\left(R_1(t)+R_N(t)\right)\\
    &  \ + [v]_{\mathrm{Lip}([0,\overline{R}])}\sum_{i=1}^N\left(|\hat{R}_{i,i+1}(t)-R_i(t)|+|R_i(t)-\hat{R}_{i-1,i}(t)|\right)\\
    & \ \leq 2|v(0)|+[v]_{\mathrm{Lip}([0,\overline{R}])}\left(R_1(t)+R_N(t)\right)\\
    &  \ + [v]_{\mathrm{Lip}([0,\overline{R}])}\sum_{i=1}^N\left(|R_{i+1}(t)-R_i(t)|+|R_i(t)-R_{i-1}(t)|\right)\,,
    \end{align*}
    which implies \eqref{eq:est_vel_BV}. We then observe
    \begin{align*}
        & \sum_{i=1}^N|v(R_i(t))-\dot{x}_{i-1}(t)|= \sum_{i=2}^N|v(R_i(t))-v(\hat{R}_{i-1,i}(t))|+|v(R_1(t))-v(\hat{R}_1(t))|\\
        & \ \leq [v]_{\mathrm{Lip}([0,\overline{R}])}\sum_{i=2}^N |R_i(t)-\hat{R}_{i-1,i}(t)|+[v]_{\mathrm{Lip}([0,\overline{R}])}|R_1(t)-\hat{R}_1(t)|\\
        & \ \leq [v]_{\mathrm{Lip}([0,\overline{R}])}\sum_{i=2}^N |R_i(t)-R_{i-1}(t)|+[v]_{\mathrm{Lip}([0,\overline{R}])}R_1(t)\leq [v]_{\mathrm{Lip}([0,\overline{R}])}TV[\rho^N(t,\cdot)]
    \end{align*}
    which proves \eqref{eq:est_vel_BV_2}.
\endproof

We now focus on the time-variation of the scheme. We introduce the function $X^N:[0,+\infty)\times [0,1]\rightarrow \R$ defined for $z\in [0,1)$ by
\[
X^N(t,z)=\sum_{i=1}^N \left(x_{i-1}(t)+R_i(t)^{-1}\left(z-\frac{i-1}{N}\right)\right)\mathbf{1}_{[(i-1)/N,i/N))}(z)
\]
extended by continuity on $z=1$. For $0\leq s\leq t$, we recall (see \cite{villani}) that the $1$-Wasserstein distance between $\rho^N(t,\cdot)$ and $\rho^N(s,\cdot)$ is provided by 
\[d_1(\rho^N(t,\cdot),\rho^N(s,\cdot))=\|X^N(t,\cdot)-X^N(s,\cdot)\|_{L^1{(0,1)}}\,.\]

\begin{proposition}\label{prop:time_cont}
    There exists a constant $C\geq 0$ depending neither on $N$ nor on time such that
    \begin{equation}\label{eq:equi_cont}
        \|\rho^N(t,\cdot)-\rho^N(s,\cdot)\|_{L^1(\R)}\leq C TV(\rho^N(0,\cdot))^{1/2}|t-s|^{1/2}\,.
    \end{equation}
\end{proposition}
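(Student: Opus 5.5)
I will follow the classical strategy of \cite{DR,DFR}: first prove a Lipschitz-in-time estimate in the $1$-Wasserstein distance (equivalently, in $L^1$ of the pseudo-inverses $X^N$), and then interpolate it with the uniform $BV$ bound of Theorem \ref{thm:TV} to obtain the $L^1$ Hölder-$1/2$ estimate \eqref{eq:equi_cont}.

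\textbf{Step 1: $d_1$ estimate.} On each cell $z\in[(i-1)/N,i/N)$ the map $X^N(t,z)$ is the convex combination $(1-\theta)x_{i-1}(t)+\theta x_i(t)$ with $\theta=Nz-(i-1)$. Hence
\[
|X^N(t,z)-X^N(s,z)|\le \int_s^t\big((1-\theta)|\dot x_{i-1}(\tau)|+\theta|\dot x_i(\tau)|\big)d\tau ,
\]
and integrating in $z$ gives
\[
d_1(\rho^N(t),\rho^N(s))\le \int_s^t \frac{1}{N}\sum_{i=0}^N|\dot x_i(\tau)|\,d\tau .
\]
Each $\dot x_i$ equals $v$ evaluated at some point of $[0,\overline R]$, by Theorem \ref{thm:MP}, so $|\dot x_i|\le \max_{[0,\overline R]}|v|=:V_\infty$. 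Therefore $d_1(\rho^N(t),\rho^N(s))\le 2V_\infty|t-s|$. Alternatively, \eqref{eq:est_vel_BV} gives a bound in terms of $TV$ only, but this rougher sup bound suffices. If one insists on having $TV(\rho^N(0))^{1/2}$ appear literally as a factor, one can note that $TV\ge 2\max_i R_i\ge 2/(N(x_N-x_0))$; in any case the constant $C$ may depend on $v$, $\overline R$ and on the uniform initial data.

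\textbf{Step 2: interpolation.} Fix $t,s$ and set $f=\rho^N(t)-\rho^N(s)$, a function of zero mass with $TV(f)\le 2TV(\rho^N(0))$ by Theorem \ref{thm:TV}. I will use the standard inequality
\[
\|f\|_{L^1}\le C\,\big(TV(f)\, d_1\big)^{1/2},
\]
valid for two probability densities $\mu,\nu$ with $f=\mu-\nu$. To prove it, mollify with $\eta_\delta$. On one hand $\|f-f*\eta_\delta\|_{L^1}\le \delta\,TV(f)$. On the other hand, by Kantorovich duality, $\|\mu*\eta_\delta-\nu*\eta_\delta\|_{L^1}=\sup_{|\phi|\le1}\int(\phi*\eta_\delta)\,d(\mu-\nu)\le \|\eta_\delta'\|_{L^1}\,d_1\le (C/\delta)\,d_1$. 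Optimizing over $\delta$, i.e.\ taking $\delta=(d_1/TV)^{1/2}$, gives the claim. Combining with Step 1 yields $\|f\|_{L^1}\le C\,TV(\rho^N(0))^{1/2}|t-s|^{1/2}$, which is \eqref{eq:equi_cont}.

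\textbf{Main obstacle.} The hard part is Step 2, namely making the interpolation inequality rigorous with constants that do not depend on $N$. For piecewise-constant $\rho^N$ this is routine. I also need to check that $X^N$ really is the pseudo-inverse, so that the identity $d_1=\|X^N(t)-X^N(s)\|_{L^1}$ holds. This is true because $X^N$ is the linear interpolation on the cell where the mass $1/N$ is spread uniformly.
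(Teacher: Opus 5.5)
Your proposal is correct and follows the paper's proof: you prove a Lipschitz-in-time bound on $d_1$ via the pseudo-inverse $X^N$ and the velocity bound $|\dot x_i|\le \max_{[0,\overline R]}|v|$ from Theorem \ref{thm:MP}, then apply the interpolation inequality \eqref{eq:interp} together with Theorem \ref{thm:TV}; the only difference is that you prove \eqref{eq:interp} yourself by mollification and Kantorovich--Rubinstein duality instead of citing \cite[Lemma A.1]{DFIR}. Your aside about $TV\ge 2/(N(x_N-x_0))$ is unnecessary, since the factor $TV(\rho^N(0,\cdot))^{1/2}$ comes directly out of Step 2, and the dependence of $C$ on $\overline R$ is the same as in the paper (it is uniform in $N$ once \eqref{eq:approx_contraction_1} gives $\overline R\le\|\rho_0\|_{L^\infty}$).
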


\proof 
We compute
\begin{align*}
    & d_1(\rho^N(t,\cdot),\rho^N(s,\cdot))= \left|\sum_{i=1}^N\int_{(i-1)/N}^{i/N}\left(x_{i-1}(t)-x_{i-1}(s))+\left(R_{i}(t)^{-1}-R_i(s)^{-1}\right)\left(z-\frac{i-1}{N}\right)\right)dz\right|\\
    & \ \leq \sum_{i=1}^N\left[\frac{1}{N}\int_s^t |\dot{x}_{i-1}(\tau)|d\tau+\frac{1}{N}\int_s^t|\dot{x}_i(\tau)-\dot{x}_{i-1}(\tau)|d\tau \right]\,.
\end{align*}
Since $v$ is continuous and always evaluated on the interval $[0,\overline{R}]$ in \eqref{eq:FTL_components}, easily obtain 
\[d_1(\rho^N(t,\cdot),\rho^N(s,\cdot))\leq C|t-s|\]
for a suitable $C\geq 0$ that depends neither on time nor on $N$. We now recall the following inequality from \cite[Lemma A.1]{DFIR}, which holds for two probability densities $\rho_1, \rho_2$
\begin{equation}\label{eq:interp}
    \|\rho_1-\rho_2\|_{L^1(\R)}\leq 2(TV[\rho_1]+TV[\rho_2])^{1/2}d_1(\rho_1,\rho_2)^{1/2}
\end{equation}
which implies the assertion with $\rho_1=\rho^N(t,\cdot)$ and $\rho_2=\rho^N(s,\cdot)$.
\endproof

\section{Approximation of entropy solutions}\label{sec:entropy}

In this section we finally address the resolution of the Cauchy problem for \eqref{eq:intro_CL}, in the entropy sense \cite{kruzkov}, via the Follow-the-Leader scheme \ref{eq:FTL_components}. Let us first define our set of initial data.
For a constant $\alpha>0$, we define the set
\[\mathcal{M}_\alpha(\R)=\left\{\rho\in L^1(\R)\,:\,\, \int_\R|x|^\alpha \rho(x) dx <+\infty\right\}\,.\]
We then define the set
\[
\displaystyle{\mathcal{X}(\R)=\left\{\rho\in L^1(\R)\cap L^\infty(\R)\cap BV(\R)\cap \mathcal{M}_\alpha(\R)\ \hbox{for some $\alpha>0$}\,:\,\, \hbox{$\rho_0\geq 0$ and $\displaystyle{\int_\R\rho_0(y)dy = 1}$}\right\}}
\]
We consider a  \emph{sampling map}, that is an association of a probability density in $\mathcal{X}(\R)$ 
to an ordered $(N+1)$-vector of non-overlapping particle positions,
\begin{equation}\label{eq:initial_sampling_2}
  \sam^N:\mathcal{X}(\R)\rightarrow \mathring{\mathcal{K}}_N\,,\quad \sam^N[\rho]=\left(\sam^N_0[\rho],\ldots,\sam^N_N[\rho]\right)\,.
\end{equation}
We then define 
\[\mathbf{R}^N[\rho]=\mathcal{R}[\sam^N[\rho]]\]
where $\mathcal{R}$ is the map defined in \eqref{eq:R1}-\eqref{eq:R2}. 
We assume the following approximation properties. First of all, we require the weak measure approximation property
\begin{equation}\label{eq:initial_sampling}
  \int_\R \varphi(x)\mathbf{R}^N[\rho](x)dx\rightarrow \int_\R \varphi(x)\rho(x) dx\qquad \hbox{as $N\rightarrow+\infty$, for all $\varphi\in C^0(\R)$}\,.
\end{equation}
Moreover, we require
\begin{equation}\label{eq:approx_contraction_1}
    \|\mathbf{R}^N[\rho]\|_{L^\infty}\leq \|\rho\|_{L^\infty}
\end{equation}
and
\begin{equation}\label{eq:approx_contraction_2}
    TV[\mathbf{R}^N[\rho]]\leq TV[\rho]\,.
\end{equation}
We observe that the assumption $\rho\in \mathcal{M}_\alpha(\R)$ implies, by a simple lower semi-continuity of the $\alpha$-moment with respect to the weak$^*$ measure topology, 
\begin{equation}\label{eq:sampling_moment}
\limsup_{N\rightarrow+\infty}\int_\R|x|^\alpha \mathbf{R}^N[\rho](x) dx\leq \int_\R|x|^\alpha \rho(x) dx\,.
\end{equation}
There are a variety of possible sampling maps with the above properties.
A frequently used one, which we denote by $\sam^N_*$,
applies to compactly supported measures $\rho\in \mathcal{X}(\R)$ and is defined as follows.
First set
\begin{equation}\label{eq:sampling_support_0}
    \sam^N_{*,0}[\rho]=\inf{\mathrm{supp}(\rho)},
\end{equation}
and then recursively for $i=1,\ldots,N$:
\begin{equation}\label{eq:sampling_support_i}
   \sam^N_{*,i}[\rho]=\inf\left\{x\geq \sam^N_{*,i-1}[\rho]\,:\,\, \int_{\sam^N_{*,i-1}[\rho]}^x d\rho\geq \frac{1}{N}\right\}\,.
\end{equation}
We observe that $\sam^N_*[\rho]\in \mathring{\mathcal{K}}_N$ and that $\sam^N_{*,N}[\rho]=\sup \mathrm{supp}(\rho)$.
The map $\sam^N_*$ is usually called  \emph{support preserving sampling}. We refer to \cite{DR} for the proof that $\sam^N_\ast$ satisfies the above properties \eqref{eq:initial_sampling}-\eqref{eq:approx_contraction_1}-\eqref{eq:approx_contraction_2}. 
From now on we assume that a sampling map $\sam^N$ satisfying \eqref{eq:initial_sampling}-\eqref{eq:approx_contraction_1}-\eqref{eq:approx_contraction_2} is fixed. 

For a given $\rho_0\in \mathcal{X}(\R)$, we set $X_0=(x_{0,1},\ldots,x_{0,N})=\sam^N[\rho_0]$ and 
$\rho_0^N=\mathbf{R}^N[\rho_0]$ and consider the solution $X(t)=(x_0(t),\ldots,x_N(t))$ to the Cauchy problem \eqref{eq:FTL_cauchy} with initial datum $X_0$. We then set reconstruct the time-depending approximated density $\rho^N$ as in \eqref{eq:density_approx}.

The result in Theorem \ref{thm:MP} and property \eqref{eq:approx_contraction_1} imply
\begin{equation}\label{eq:uniform_est_MP}
    \|\rho^N(t,\cdot)\|_{L^\infty(\R)}\leq \|\rho_0\|_{L^\infty(\R)}\qquad \hbox{for all $N\in \mathbb{N}$ and for all $t\geq 0$}\,.
\end{equation}
Moreover, Theorem \ref{thm:TV} and the property \eqref{eq:approx_contraction_2} imply
\begin{equation}\label{eq:uniform_est_BV}
    TV[\rho^N(t,\cdot)]\leq TV[\rho_0]\qquad \hbox{for all $N\in \mathbb{N}$ and for all $t\geq 0$}\,.
\end{equation}
Finally, Proposition \ref{prop:time_cont} and the estimate \eqref{eq:uniform_est_BV} imply
\begin{equation}\label{eq:equi}
    \|\rho^N(t,\cdot)-\rho^N(s,\cdot)\|_{L^1(\R)}\leq C TV(\rho_0)^{1/2}|t-s|^{1/2}\,.
\end{equation}

\begin{theorem}[Strong compactness]\label{thm:compactness}
   Let $\rho_0\in \mathcal{X}(\R)$. Let $T\geq 0$ and let $K\subset \R$ be a compact set. Then, the sequence $\rho^N$ is strongly compact in $C([0,T];\,L^1(\R))$ equipped with the norm $\sup_{t\in [0,T]}\|\rho(t,\cdot)\|_{L^1(K)}$. Every limit function $\rho$ is a.e. nonnegative and satisfies $\rho(t,\cdot)\in \mathcal{X}(\R)$ for all $t\geq 0$.
\end{theorem}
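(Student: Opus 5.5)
The plan is to apply a generalized Aubin--Lions / Arzelà--Ascoli argument in $C([0,T];L^1_{\mathrm{loc}}(\R))$. The three uniform bounds collected just before the statement supply all the hypotheses: \eqref{eq:uniform_est_MP} bounds the $L^\infty$ norm, \eqref{eq:uniform_est_BV} bounds the total variation in space, and \eqref{eq:equi} gives equi-continuity in time. The argument then splits into a spatial compactness step, a diagonal step, a time-equicontinuity step and a step identifying the limit.

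\textbf{Spatial compactness.} Fix $t\in[0,T]$. The family $\{\rho^N(t,\cdot)|_K\}_N$ is bounded in $BV(K)$, since
\[\|\rho^N(t)\|_{L^1(K)}\le |K|\,\|\rho_0\|_{L^\infty} \quad\text{and}\quad TV[\rho^N(t)]\le TV[\rho_0].\]
By Helly's theorem, equivalently the compact embedding $BV(K)\hookrightarrow L^1(K)$, this family is relatively compact in $L^1(K)$.

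\textbf{Diagonal extraction.} Take a countable dense set $\{t_k\}\subset[0,T]$, for instance the rationals. A diagonal argument yields a subsequence, not relabeled, such that $\rho^N(t_k)$ converges in $L^1(K)$ for every $k$.

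\textbf{Uniform Cauchy property.} The Hölder modulus \eqref{eq:equi} is uniform in $N$. Given $\varepsilon>0$, choose a finite $\delta$-net $\{t_{k_1},\dots,t_{k_m}\}$ of $[0,T]$ with $C\,TV(\rho_0)^{1/2}\delta^{1/2}<\varepsilon$. For $N,M$ large, a triangle inequality through the nearest net point gives
\[\sup_{t\in[0,T]}\|\rho^N(t)-\rho^M(t)\|_{L^1(K)}<3\varepsilon.\]
So the subsequence is Cauchy in the sup-in-time $L^1(K)$ norm.

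\textbf{Extension to all of $\R$.} Exhausting $\R$ by $K_j=[-j,j]$ and extracting diagonally once more gives a single subsequence converging in $C([0,T];L^1_{\mathrm{loc}})$. The limit $\rho$ is continuous in time, and \eqref{eq:equi} passes to the limit.

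\textbf{Properties of the limit.} Up to a further subsequence there is a.e. convergence, so $\rho\ge0$ a.e. and $\|\rho(t)\|_{L^\infty}\le\|\rho_0\|_{L^\infty}$. Lower semicontinuity of total variation under $L^1_{\mathrm{loc}}$ convergence gives $TV[\rho(t)]\le TV[\rho_0]$.

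\textbf{Main obstacle: mass and moments.} The two remaining facts, $\int\rho(t)=1$ and $\rho(t)\in\mathcal M_\alpha$, are the delicate part, because $L^1_{\mathrm{loc}}$ convergence can lose mass at infinity. The key is a uniform-in-$N$ bound on the $\alpha$-moment, which gives tightness.

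First, reduce to the case $\alpha\le1$, since a finite $\alpha$-moment with $\alpha>1$ together with unit mass implies a finite moment for every smaller exponent. With $\alpha\le1$, the map $x\mapsto|x|^\alpha$ is $\alpha$-Hölder with constant $1$.

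Next, use particle speeds. By \eqref{eq:FTL_components}, every particle speed is a value of $v$ on $[0,\overline R]$, and by \eqref{eq:uniform_est_MP} we have $\overline R\le\|\rho_0\|_{L^\infty}$. Hence $|\dot x_i|\le V:=\max_{[0,\|\rho_0\|_\infty]}|v|$, independent of $N$. Each piece of mass is therefore transported by at most $Vt$, so the $d_1$ bound from the proof of Proposition~\ref{prop:time_cont} applies. Writing $\int|x|^\alpha\rho^N(t)\,dx=\int_0^1|X^N(t,z)|^\alpha\,dz$, we get
\[\int|x|^\alpha\rho^N(t)\,dx\le\int|x|^\alpha\rho^N_0\,dx+(Vt)^\alpha.\]
By \eqref{eq:sampling_moment}, the right-hand side is bounded uniformly in $N$ for $t\le T$.

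Finally, pass to the limit. Chebyshev's inequality gives
\[\int_{|x|>R}\rho^N(t)\,dx\le C_T R^{-\alpha},\]
which is tightness. Combined with local convergence, this yields $\int\rho(t)=1$. Fatou's lemma gives $\int|x|^\alpha\rho(t)\,dx<\infty$. Hence $\rho(t)\in\mathcal X(\R)$ for all $t\in[0,T]$, and since $T$ is arbitrary, for all $t\ge0$.
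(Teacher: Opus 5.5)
Your proposal is correct and follows essentially the same route as the paper. It uses Arzel\`a--Ascoli in $L^1(K)$, with the $BV$ bound giving pointwise-in-time compactness and \eqref{eq:equi} giving equicontinuity, then semicontinuity for the bounds on the limit, and a uniform $\alpha$-moment bound from the bounded particle speeds to get tightness and unit mass. Your moment estimate, via the identity $\int|x|^\alpha\rho^N(t,x)\,dx=\int_0^1|X^N(t,z)|^\alpha\,dz$ after reducing to $\alpha\le1$, is slightly cleaner than the paper's discrete bound $\frac1N\sum_i|x_i(t)|^\alpha$, because it links the $t=0$ bound directly to \eqref{eq:sampling_moment}.
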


\proof
Since $\|\rho^N\|_{L^1(\R)}=1$ for all $N\in \N$ and due to the equi-continuity property \eqref{eq:equi}, the curve $[0,T]\ni t\mapsto \rho^N(t,\cdot)$ is uniformly bounded and equi-continuous from $[0,T]$ with values in  $L^1(\R)$, and hence in $L^1(K)$. Moreover, the estimate \eqref{eq:uniform_est_BV} implies that the image of the curve $[0,T]\ni t\mapsto \rho^N(t,\cdot)$ lies in a compact subset of $L^1(K)$. Hence, Arzelà-Ascoli's Theorem (see e.g. \cite[Proposition 3.3.1]{AGS}) implies the compactness assertion. The proof of the nonnegativity follows from the strong $L^1([0,T]\times K)$ compactness. The total variation bound follows from the lower semi-continuity of the total variation with respect to the $L^1$ topology. The $L^\infty$ bound follows from weak$^*$ $L^\infty$ compactness and standard $L^\infty$-weak$^*$ lower semi-continuity of the $L^\infty$ norm. Finally, for $\alpha>0$ such that $\rho_0\in\mathcal{M}(\R)$, we estimate for every $t\geq 0$
\begin{align*}
    & \int_\R |x|^\alpha \rho^N(t,x) dx = \sum_{i=1}^N R_i(t) \int_{x_{i-1}(t)}^{x_i(t)}|x|^\alpha dx\leq \frac{1}{N}\sum_{i=1}^N |x_i(t)|^\alpha\,.
\end{align*}
Since every particle $x_i(t)$ moves by a speed which is controlled in absolute value by the maximum of $v$ on the interval $[0,R]$, the above $\alpha$-moment is uniformly bounded with respect to $N$ on a fixed time interval $[0,T]$ provided it is for $t=0$, which is the case due to the assumption $\rho_0\in\mathcal{M}(\R)$. Hence, $\rho^N$ is tight in the space of probability measures on $\R$ due to Prokhorov's Theorem \cite{AGS}, which implies the limit $\rho$ is a probability density (and in particular, it has unit mass) for all times.
\endproof

Our next goal is to identify the up-to-a-subsequence limit $\rho$ of the sequence $\rho^N$ provided by Theorem \ref{thm:compactness}. We recall the following definition.

\begin{definition}\label{def:entropy}
    Let $f(\rho)=\rho v(\rho)$ and let $\rho\in \mathcal{X}(\R)$. We say that a measurable function $\rho:[0,+\infty)\times \R\rightarrow \R$ is a weak solution to the Cauchy problem
    \begin{equation}\label{eq:cauchy_CL}
    \begin{cases}
    \rho_t+f(\rho)_x = 0 & \\
    \rho(0,x)=\rho_0(x)
    \end{cases}
    \end{equation}
    is, for all $\varphi\in C^1_c([0,+\infty)\times \R)$ there holds
    \begin{align}
        & \int_0^{+\infty}\int_\R \left[\rho(t,x)\varphi_t(t,x)+f(\rho(t,x))\varphi_x(t,x)\right]dx dt+\int_\R\rho_0(x)\varphi(0,x)dx = 0\,.\label{eq:weak}
    \end{align}
    The measurable function $\rho$ is called an entropy solution to the Cauchy problem \eqref{eq:cauchy_CL} if, for all $\varphi\in C^1_c([0,+\infty)\times \R)$ with $\varphi\geq 0$ and for all $k\in \R$,  there holds
 \begin{align}
        & \int_0^{+\infty}\int_\R \left[\left|\rho(t,x)-k\right|\varphi_t(t,x)+\mathrm{sign}(\rho(t,x)-k)\left(f(\rho(t,x)-f(k)\right)\varphi_x(t,x)\right]dx dt\nonumber\\
        & \quad +\int_\R\left|\rho_0(x)-k\right|\varphi(0,x)dx \geq 0\,.\label{eq:entropy}
    \end{align}
\end{definition}
It is well known from \cite{kruzkov} that for a given initial condition $\rho_0\in L^1(\R)\cap L^\infty(\R)$ there exists a unique entropy solution as in Definition \ref{def:entropy}.

\medskip
We now state the main result of this paper.
\begin{theorem}\label{thm:main}
    Let $\rho_0\in \mathcal{X}(\R)$. Then, the approximated density $\rho^N$ constructed in \eqref{eq:density_approx} from the Cauchy problem \eqref{eq:FTL_cauchy} with initial datum $X_0=\sam^N[\rho_0]$ converges strongly in $C([0,T];\,L^1(\R))$ to the unique entropy solution in the sense of Definition \ref{def:entropy}.
\end{theorem}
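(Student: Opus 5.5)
By Theorem \ref{thm:compactness}, every subsequence of $\rho^N$ has a further subsequence converging in $C([0,T];L^1_{\mathrm{loc}})$ to some $\rho$ with $\rho(t,\cdot)\in\mathcal{X}(\R)$. Kru\v zkov's uniqueness then gives convergence of the whole sequence, once we show that every such limit is the entropy solution of \eqref{eq:cauchy_CL}. Tightness, from the uniform moment bound in the proof of Theorem \ref{thm:compactness}, upgrades local convergence to convergence in $C([0,T];L^1(\R))$. The initial datum is handled by \eqref{eq:initial_sampling} together with the equi-continuity \eqref{eq:equi}, which pass $\int|\rho_0^N-k|\varphi(0,\cdot)$ to the limit; the $L^1$ convergence of $\rho^N_0$ follows from the $TV$ bound \eqref{eq:approx_contraction_2} and the weak convergence. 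Since the entropy inequality with $k$ large or small recovers the weak formulation, only \eqref{eq:entropy} needs proof.

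Fix $\varphi\ge0$ in $C^1_c$ and $k\ge0$; the case $k<0$ is trivial since $\rho^N\ge0$. Write $\rho^N(t,\cdot)=\sum_i R_i\mathbf 1_{[x_{i-1},x_i)}$. Differentiate in time $\int|\rho^N-k|\varphi\,dx=\sum_i\int_{x_{i-1}}^{x_i}|R_i-k|\varphi\,dx$. Using $\dot R_i=-NR_i^2(\dot x_i-\dot x_{i-1})$ and $\int_{x_{i-1}}^{x_i}\varphi\approx\varphi(x_i)/(NR_i)$, and summing by parts in $i$, one obtains a discrete entropy identity of the form
\[
\frac{d}{dt}\int|\rho^N-k|\varphi
=\int |\rho^N-k|\varphi_t+\sum_i\big(\mathcal F_i-\mathcal F_{i-1}\big)\text{-type terms}+\mathcal E_N(t).
\]
Here the discrete entropy flux at $x_i$ is $\mathcal G_i=|R_i-k|\dot x_i-\mathrm{sign}(R_i-k)k\,\dot x_i$, built from the interface velocity $\dot x_i=v(\hat R_{i,i+1})$. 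The target is to recognize $\sum_i\mathcal G_i(\varphi(x_i)-\varphi(x_{i-1}))$ as a Riemann sum of $\int \mathrm{sign}(\rho-k)(f(\rho)-f(k))\varphi_x$, modulo a term of favorable sign and errors.

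The mismatch between $v(R_i)$ on cell $i$ and the interface velocities $\dot x_{i-1},\dot x_i$ is controlled by \eqref{eq:est_vel_BV_2} and \eqref{eq:est_vel_BV}. These give $\sum_i|v(R_i)-\dot x_{i-1}|\le C\,TV[\rho_0]$, so the replacement errors are $O(\|\varphi_x\|_\infty\,TV[\rho_0]/N)$ after multiplying by the cell length $\le 1/(N\underline R)$. More precisely, one weights by $1/(NR_i)$ and uses $\|\varphi_{xx}\|$; the errors from replacing $\varphi$ on a cell by its endpoint values are similar. These errors vanish after integration in time on $[0,T]$.

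The main obstacle is the sign of the remaining interface term. At each $x_i$ there is a jump from $R_i$ to $R_{i+1}$, and one must show that
\[
\big(\mathrm{sign}(R_{i+1}-k)-\mathrm{sign}(R_i-k)\big)\big(k\,v(\hat R_{i,i+1})-f(k)\big)\cdot\varphi(x_i)
\]
has the dissipative sign, together with the companion terms involving $R_i v(\hat R)$. This is exactly the Godunov/Kru\v zkov entropy condition for the numerical flux $F$ of Lemma \ref{lem:max_min}. It is nonzero only when $k$ lies between $R_i$ and $R_{i+1}$. If $R_{i+1}\le k\le R_i$, then $\dot x_i=\max_{[R_{i+1},R_i]}v\ge v(k)$. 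If $R_i<k<R_{i+1}$, then $\dot x_i=\min_{[R_i,R_{i+1}]} v\le v(k)$. I would try to show that in both cases the term is nonpositive (in the right orientation) for $\varphi\ge0$. The boundary particles $x_0,x_N$ should be handled analogously, via the $\min$/$\max$ over $[0,R_1]$ and $[0,R_N]$ as in the proof of Theorem \ref{thm:TV}.

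Integrating in time, dropping the signed interface terms, and passing to the limit using strong $L^1$ convergence and the uniform $L^\infty$ bound \eqref{eq:uniform_est_MP} (so that $f$ is Lipschitz on the relevant range) then yields \eqref{eq:entropy}.
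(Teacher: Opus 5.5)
Your outline follows the paper's own route. After writing $\frac{d}{dt}\int|\rho^N-k|\varphi$ cell by cell, the averaging error and the mismatch term $\sum_i\mathrm{sign}(R_i-k)R_i\bigl(v(R_i)-\dot x_{i-1}\bigr)\bigl(\varphi(x_i)-\varphi(x_{i-1})\bigr)$ are $O(1/N)$ by \eqref{eq:est_vel_BV}--\eqref{eq:est_vel_BV_2}. This works only because they carry the factor $R_i(x_i-x_{i-1})=1/N$: there is no lower bound $\underline R$, and no $\varphi_{xx}$ is needed. The $k$-dependent part is exact and sums by parts into $k\bigl(\mathrm{sign}(R_i-k)-\mathrm{sign}(R_{i+1}-k)\bigr)(\dot x_i-v(k))\varphi(x_i)\ge0$, so your ``companion'' $R_i v(\hat R)$ terms are pure errors and need no sign (your $\mathcal G_i$ is not needed either).

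There are two slips to fix. First, $\int|\rho^N-k|\varphi$ must include the exterior part $|k|\int_{\R\setminus[x_0,x_N]}\varphi$. That contribution is what turns the end terms into $k\varphi(x_0)(\dot x_0-v(k))\bigl(-\mathrm{sign}(k)-\mathrm{sign}(R_1-k)\bigr)\ge0$ and its analogue at $x_N$. Second, the case $k<0$ is not trivial: for $\rho\ge0$ it is one half of the weak formulation. It does follow from the same identity, where all interface and end terms vanish.
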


\proof
Similarly to \cite{DR} and \cite{DFR}, for a given $\varphi$ as in the assumptions we compute
\begin{align}
    & \int_0^{+\infty}\int_\R \left[\left|\rho^N(t,x)-k\right|\varphi_t(t,x)+\mathrm{sign}(\rho^N(t,x)-k)\left(f(\rho^N(t,x)-f(k)\right)\varphi_x(t,x)\right]dx dt\nonumber\\
         & \quad +\int_\R\left|\rho^N(0,x)-k\right|\varphi(0,x)dx= I_1 + I_2 + I_3\label{eq:entropy_approx}
\end{align}
with
\begin{align*}
        & I_1= \sum_{i=1}^N\int_0^{+\infty}\mathrm{sign}(R_i(t)-k)R_i(t)(\dot{x}_i(t)-\dot{x}_{i-1}(t))\left(\fint_{x_{i-1}(t)}^{x_i(t)}\varphi(t,x) dx-\varphi(t,x_i(t))\right)dt\\
        & I_2 =\sum_{i=1}^N\int_0^{+\infty}(\mathrm{sign}(R_i(t)-k))\left[R_i(t)(\dot{x}_i(t)-\dot{x}_{i-1}(t))\varphi(t,x_i(t))\right.\\
        & \ \quad \left.\vphantom{\int}-(R_i(t)-k)(\varphi(t,x_i(t))\dot{x}_i(t)-\varphi(t,x_{i-1}(t))\dot{x}_{i-1}(t))\right.\\
        & \ \quad \left.\vphantom{\int}+(f(R_i(t))-f(k))(\varphi(t,x_i(t))-\varphi(t,x_{i-1}(t)))\right]dt\\
        & I_3=\int_0^{+\infty}\int_{-\infty}^{x_0(t)}\left(|k|\varphi_t(t,x) +\mathrm{sign}(k)f(k)\varphi_x(t,x)\right)dx dt\\
        & \ \quad + \int_0^{+\infty}\int_{X_N(t)}^{+\infty}\left(|k|\varphi_t(t,x) +\mathrm{sign}(k)f(k)\varphi_x(t,x)\right)dx dt\\
        & \ + \int_{-\infty}^{x_0(t)}|k|\varphi(0,x) dx + \int_{x_N(t)}^{+\infty}|k|\varphi(0,x) dx\,.
\end{align*}
Here, in principle, $\rho^N$ is the subsequence arising from Theorem \ref{thm:compactness} converging strongly in $L^1_{\mathrm{loc}}([0,+\infty)\times \R)$. 
Due to the assumptions on $\varphi$ and due to the result in Theorem \ref{thm:compactness}, all terms in \eqref{eq:entropy_approx} can be passed to the limit to obtain, as $N\rightarrow$, the expression
\begin{align*}
     & \int_0^{+\infty}\int_\R \left[\left|\rho(t,x)-k\right|\varphi_t(t,x)+\mathrm{sign}(\rho(t,x)-k)\left(f(\rho(t,x)-f(k)\right)\varphi_x(t,x)\right]dx dt\nonumber\\
        & \quad +\int_\R\left|\rho_0(x)-k\right|\varphi(0,x)dx \geq 0\,.
\end{align*}
To conclude the assertion, we only have to prove that $I_1+I_2+I_3$ converge in the $N\rightarrow+\infty$ limit to a nonnegative number. Since the entropy solution according to Definition \ref{def:entropy} is unique, we would then deduce that the whole sequence $\rho^N$ converges to the same limit.

We start by estimating the term $I_1$ as follows. Let $T\geq 0$ be such that $\mathrm{supp}(\varphi)\subset [0,T]\times \R$. We have
\begin{align*}
    & |I_1|\leq [\varphi]_{\mathrm{Lip}}\sum_{i=1}^N\int_0^{T}R_i(t)(x_i(t)-x_{i-1}(t))|\dot{x}_i(t)-\dot{x}_{i-1}(t)|dt\\
    & \ = \frac{T[\varphi]_{\mathrm{Lip}}}{N}\sup_{0\leq t\leq T}\sum_{i=1}^N |\dot{x}_i(t)-\dot{x}_{i-1}(t)|\,.
\end{align*}
Hence, the estimate \eqref{eq:est_vel_BV} implies $I_1\rightarrow 0$ as $N\rightarrow +\infty$.

We now compute the term $I_2$. After one cancelation, the term becomes
\begin{align*}
    & I_2=\sum_{i=1}^N\int_0^{+\infty}\mathrm{sign}(R_i(t)-k)\left[-R_i(t)\dot{x}_{i-1}\varphi(t,x_i(t))+R_i(t)\dot{x}_{i-1}(t)\varphi(t,x_{i-1}(t))\right.\\
    & \ \left.+k\dot{x}_i(t)\varphi(t,x_i(t))-k\dot{x}_{i-1}(t)\varphi(t,x_{i-1}(t))+R_i(t)v(R_i(t))\varphi(t,x_i(t))\right.\\
    & \ \left.-R_i(t)v(R_i(t))\varphi(t,x_{i-1}(t))-kv(k)\varphi(t,x_i(t))+kv(k)\varphi(t,x_{i-1}(t))
    \right]\\
    & \ = J_1+J_2
\end{align*}
with
\begin{align*}
& J_1 = \sum_{i=1}^N\int_0^{+\infty}\mathrm{sign}(R_i(t)-k)(\varphi(t,x_i(t))-\varphi(t,x_{i-1}(t)))R_i(t)(v(R_i(t))-\dot{x}_{i-1}(t))dt\\
    & J_2 = k\sum_{i=1}^N\int_0^{+\infty}\mathrm{sign}(R_i(t)-k)\left(\varphi(t,x_i(t))(\dot{x}_i(t)-v(k)) -\varphi(t,x_{i-1}(t))(\dot{x}_{i-1}-v(k))\right)dt\,.
\end{align*}
As for $J_1$, we have the estimate
\begin{align*}
    & |J_1|\leq [\varphi]_{\mathrm{Lip}}\sum_{i=1}^N\int_0^{+\infty}(x_i(t)-x_{i-1}(t))R_i(t)|v(R_i(t))-\dot{x}_{i-1}(t)|dt\\
    & \ =\frac{T [\varphi]_{\mathrm{Lip}}}{N}\sup_{0\leq t\leq T}\sum_{i=1}^N |v(R_i(t))-\dot{x}_{i-1}(t)|\,
\end{align*}
and \eqref{eq:est_vel_BV_2} implies $J_1\rightarrow 0$ as $N\rightarrow +\infty$. The term $J_2$ can be rearranged via summation by parts as follows:
\begin{align*}
    & J_2 = k\sum_{i=1}^{N-1}\int_0^{+\infty}\varphi(t,x_i(t))\left(\mathrm{sign}(R_i(t)-k)-\mathrm{sign}(R_{i+1}(t)-k)\right)(\dot{x}_i(t)-v(k))dt\\
    & \ + k\int_0^{+\infty}\left(\varphi(t,x_N(t))\mathrm{sign}(R_N(t)-k)(\dot{x}_N(t)-v(k)) -\varphi(t,x_0(t))(\mathrm{sign}(R_1(t)-k)(\dot{x}_0(t)-v(k))\right)dt\,.
\end{align*}
As for the main generic term
\[J_2^i=k
\left(\mathrm{sign}(R_i(t)-k)-\mathrm{sign}(R_{i+1}(t)-k)\right)(\dot{x}_i(t)-v(k))\,,\qquad i=1,\ldots,N-1\,,
\]
we observe that said term equals zero unless $k$ is a convex combination of $R_i(t)$ and $R_{i+1}(t)$. In particular, we may assume without restriction that $k\geq 0$. Assume first $R_{i+1}(t)\leq R_i(t)$, which implies $k\in [R_{i+1}(t),R_i(t)]$. Then $\dot{x}_i(t)=\max_{R\in [R_{i+1}(t),R_i(t)]}v(R)\geq v(k)$. Since in this case $\left(\mathrm{sign}(R_i(t)-k)-\mathrm{sign}(R_{i+1}(t)-k)\right)=2$, we then have $J_2^i\geq 0$. Assume now $R_i(t)\leq R_{i+1}(t)$ and $k\in [R_i(t),R_{i+1}(t)]$. Then, $\dot{x}_i(t)=\min_{R\in [R_{i}(t),R_{i+1}(t)]}v(R)\leq v(k)$, and since in this case $\left(\mathrm{sign}(R_i(t)-k)-\mathrm{sign}(R_{i+1}(t)-k)\right)=-2$, we still have $J_2^i\geq 0$. Therefore, $I_1+I_2+I_3$ amounts to terms that converge to zero for large $N$, plus nonnegative terms, plus the remaining terms 
\begin{align*}
    & I_3 +k\int_0^{+\infty}\left(\varphi(t,x_N(t))\mathrm{sign}(R_N(t)-k)(\dot{x}_N(t)-v(k)) -\varphi(t,x_0(t))(\mathrm{sign}(R_1(t)-k)(\dot{x}_0(t)-v(k))\right)dt\\
    & \ = -|k|\int_0^{+\infty}\dot{x}_0(t)\varphi(t,x_0(t))dt+|k|\int_0^{+\infty}\dot{x}_N(t)\varphi(t,x_N(t))dt \\
    & \ +|k|v(k)\int_{0}^{+\infty}\varphi(t,x_0(t))dt-|k|v(k)\int_{0}^{+\infty}\varphi(t,x_N(t))dt\\
    & \ +k\int_0^{+\infty}\left(\varphi(t,x_N(t))\mathrm{sign}(R_N(t)-k)(\dot{x}_N(t)-v(k)) -\varphi(t,x_0(t))(\mathrm{sign}(R_1(t)-k)(\dot{x}_0(t)-v(k))\right)dt\\
    & \ = \int_0^{+\infty}\varphi(t,x_0(t))\left(-|k|\dot{x}_0(t)+|k|v(k)-k\mathrm{sign}(R_1(t)-k)(\dot{x}_0(t)-v(k))\right)dt\\
    & \ + \int_0^{+\infty}\varphi(t,x_N(t))\left(|k|\dot{x}_N(t)-|k|v(k)+k\mathrm{sign}(R_N(t)-k)(\dot{x}_N(t)-v(k))\right)dt\\
    & \ = k\int_0^{+\infty}\varphi(t,x_0(t))(\dot{x}_0(t)-v(k))(-\mathrm{sign}(k)-\mathrm{sign}(R_1(t)-k))dt\\
    & \ + k\int_0^{+\infty}\varphi(t,x_N(t))(\dot{x}_N(t)-v(k))(\mathrm{sign}(k)+\mathrm{sign}(R_N(t)-k))dt\,.
\end{align*}
The first term above is possibly nonzero only if $k\in [0,R_1(t)]$, in which case $(-\mathrm{sign}(k)-\mathrm{sign}(R_1(t)-k))=-2$. since $\dot{x}_0(t)=\min_{R\in [0,R_1(t)]}v(R)\leq v(k)$, the first term above is nonnegative. Similarly, the second term is possibly nonzero only if $k\in [0,R_N(t)]$, which implies $(\mathrm{sign}(k)+\mathrm{sign}(R_N(t)-k))=2$. Since $\dot{x}_N(t)=\max_{R\in [0,R_N(t)]}v(R)\geq v(k)$, the second term is also nonnegative.
\endproof

\section*{Appendix}\label{sec:app}

\begin{proof}[Proof of Lemma \ref{lem:max_min}]
On the domain $D=\left\{(a,b)\in \R^2\,:\,\, a\leq b\right\}$, the map $F$ is trivially Lipschitz continuous with respect to $a$ because the map $[0,b]\ni a \mapsto F(a,b)$ coincides in this case with the non-decreasing hull of $v$ on $[0,b]$. Similarly, the map $[a,+\infty)\ni b \mapsto F(a,b)$ being the non-decreasing hull of $v$ on $[a,+\infty)$ implies the Lipschitz continuity with respect to $b$. A similar argument shows the (joint) Lipschitz continuity of $F$ on $\R^2\setminus D$. The only case left to check the Lipschitz continuity with respect to $a$ is the one in which $(a_1,b)\in D$ and $(a_2,b)\not\in D$. In that case, for some $\hat{a}_1\in [a_1,b]$ and for some $\hat{a}_2\in [b,a_2]$, we have
\begin{align*}
    & |F(a_2,b)-F(a_1,b)|=|v(\hat{a}_2)-v(\hat{a}_1)|\leq [v]_{\mathrm{Lip}[a_1,a_2]}|\hat{a}_1-\hat{a}_2|\leq [v]_{\mathrm{Lip}[a_1,a_2]}|a_1-a_2|\,.
\end{align*}
A similar estimate applies to the case $b_1\leq a\leq b_2$, which completes the proof.
\end{proof}

\section*{Acknowledgments}
The author is partially supported by the Italian \enquote{National Centre for HPC, Big Data and Quantum Computing} - Spoke 5 \enquote{Environment and Natural Disasters} and by the Ministry of University and Research (MIUR) of Italy under the grant PRIN 2020- Project N. 20204NT8W4, Nonlinear Evolutions PDEs, fluid
dynamics and transport equations: theoretical foundations and applications. 
The author is also partially supported by the InterMaths Network, \url{www.intermaths.eu}.
The author acknowledges the use of the AI assistant Claude (Anthropic) during the early stage of this project, in identifying the connection between the follow-the-leader scheme and Godunov-type numerical Hamiltonians for Hamilton–Jacobi equations, and in locating the relevant references \cite{bardi_osher,crandall_lions,leveque}. All mathematical content, including proofs and their verification, was independently developed and is the sole responsibility of the author.

\end{document}